\documentclass[12pt,a4paper,english,reqno]{amsart}
\usepackage[a4paper,footskip=1.5em]{geometry}
\usepackage{amsmath,amssymb,amsthm,mathtools,bbm}
\usepackage[mathscr]{euscript}
\usepackage[usenames,dvipsnames]{color}
\usepackage{adjustbox,tikz,calc,graphics,babel,standalone}
\usepackage{subcaption}
\usepackage{csquotes,enumerate,verbatim}
\usepackage[final]{microtype}
\usepackage[numbers]{natbib}
\usetikzlibrary{shapes.misc,calc,intersections,patterns,decorations.pathreplacing}
\usetikzlibrary{arrows,shapes,positioning}
\usetikzlibrary{decorations.markings}
\usepackage{hyperref}
\hypersetup{colorlinks=true,linkcolor=blue,citecolor=blue,pdfpagemode=UseNone,pdfstartview={XYZ null null 1.00}}
\usepackage{cmtiup}

\pagestyle{plain}
\linespread{1.2}
\setlength{\parskip}{3pt}

\tikzstyle arrowstyle=[scale=1]

\theoremstyle{plain}
\newtheorem*{theorem*}{Theorem}
\newtheorem{theorem}{Theorem}[section]
\newtheorem{lemma}[theorem]{Lemma}
\newtheorem{claim}[theorem]{Claim}
\newtheorem{proposition}[theorem]{Proposition}
\newtheorem*{claim*}{Claim}

\newtheorem{conjecture}[theorem]{Conjecture}
\newtheorem{problem}[theorem]{Problem}

\theoremstyle{definition}

\theoremstyle{remark}

\def\PP{\mathcal{P}}
\def\QQ{\mathcal{Q}}
\def\CC{\mathscr{C}}
\def\DD{\mathscr{D}}
\def\EE{\mathscr{E}}
\def\FF{\mathscr{F}}
\def\N{\mathbb{N}}

\def\cycprec{\prec}

\let\originalleft\left
\let\originalright\right
\renewcommand{\left}{\mathopen{}\mathclose\bgroup\originalleft}
\renewcommand{\right}{\aftergroup\egroup\originalright}

\makeatletter
\def\imod#1{\allowbreak\mkern10mu({\operator@font mod}\,\,#1)}
\makeatother

\begin{document}

\title{Long cycles in Hamiltonian graphs}

\author{Ant\'onio Gir\~ao}
\address{Department of Pure Mathematics and Mathematical Statistics, University of Cambridge, Wilberforce Road, Cambridge CB3\thinspace0WB, UK}
\email{A.Girao@dpmms.cam.ac.uk}

\author{Teeradej Kittipassorn}
\address{Departamento de Matem\'atica, Pontif\'icia Universidade Cat\'olica do Rio de Janeiro (PUC-Rio), Rua Marqu\^es de S\~ao Vicente 225, G\'avea, Rio de Janeiro, RJ 22451-900, Brazil}
\email{ping41@mat.puc-rio.br}

\author{Bhargav Narayanan}
\address{Department of Mathematics, Rutgers University, Piscataway NJ 08854, USA}
\email{narayanan@math.rutgers.edu}

\date{12 September 2017}
\subjclass[2010]{Primary 05C45; Secondary 05C38}

\begin{abstract}
We prove that if an $n$-vertex graph with minimum degree at least $3$ contains a Hamiltonian cycle, then it contains another cycle of length $n-o(n)$; this implies, in particular, that a well-known conjecture of Sheehan from 1975 holds asymptotically. Our methods, which combine constructive, poset-based techniques and non-constructive, parity-based arguments, may be of independent interest.
\end{abstract}

\maketitle

\section{Introduction}
A \emph{Hamiltonian cycle} in a graph $G$ is a cycle spanning the vertex set of $G$, and a graph is said to be \emph{Hamiltonian} if it contains a Hamiltonian cycle. Over the last seventy years, the following problem has received a great deal of attention: under what conditions does a graph $G$ with a Hamiltonian cycle $\CC$ contain another long cycle distinct from $\CC$? Of course, for this question to be interesting, one needs to ensure that $G$ contains additional edges (not already in $\CC$); a moment's thought further reveals that additional edges are not enough in and of themselves, but rather, one requires additional edges that are `equidistributed' over the vertex set of $G$. This problem of understanding when the presence of additional edges in a Hamiltonian graph forces the existence of another long (possibly Hamiltonian) cycle has a storied history; see the surveys of Gould~\citep{survey} and Bondy~\citep{prob_sur} for an overview. 

Our main contribution here is to show that perhaps the weakest possible condition promising some form of `equidistribution of additional edges' in a graph with a Hamiltonian cycle is sufficient to guarantee the existence of another long cycle; writing $\delta(G)$ for the minimum degree of a graph $G$, we prove the following.
\begin{theorem}\label{mainthm}
For all $n \in \N$, if an $n$-vertex graph $G$ with $\delta(G) \ge 3$ contains a Hamiltonian cycle, then $G$ contains another cycle of length at least $n-cn^{4/5}$, where $c >0$ is an absolute constant.
\end{theorem}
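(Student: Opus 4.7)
Fix the given Hamiltonian cycle $\CC = v_1 v_2 \cdots v_n v_1$ (indices modulo $n$) and call the edges of $G$ not lying on $\CC$ \emph{chords}; write $X$ for the set of chords. For a chord $e = \{v_i, v_j\}$ define its \emph{span} $s(e) := \min\{|i-j|,\, n - |i-j|\}$. Traversing the longer of the two $v_i v_j$-arcs of $\CC$ followed by the edge $e$ produces an alternative cycle distinct from $\CC$ of length $n + 1 - s(e)$, so with $\ell := c n^{4/5}$ for a small constant $c>0$, the desired conclusion is immediate if any chord has span at most $\ell$. Hence assume henceforth that every chord has span exceeding $\ell$; since $\delta(G) \ge 3$, every vertex lies in at least one chord and so $|X| \ge n/2$.

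In this regime the plan is a dichotomy between a constructive, poset-based argument and a non-constructive, parity-based argument, as flagged in the abstract. \textbf{Poset-based construction.} Endow $X$ with the partial order $e \preceq e'$ whenever the short arc of $e$ is contained in the short arc of $e'$. By Dilworth's theorem, $X$ contains either a long chain of pairwise-nested chords or a large antichain; in the antichain case a further Ramsey-type refinement produces many pairwise-crossing chords. A nested or crossing family of $k$ chords can be spliced with the cycle arcs it subtends into an explicit alternative cycle that uses all but $O(n/k)$ vertices of $\CC$: each chord shortcuts one long arc while the others re-cover its interior in a zig-zag. Choosing $k$ of the appropriate order produces a cycle of length $n - O(\ell)$. \textbf{Parity-based closure.} The poset step can fail only if $X$ is quantitatively close to a perfect matching of $V(\CC)$ with nearly regular spacing between pairs. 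In this rigid regime, pass to an auxiliary multigraph obtained from $G$ by doubling or deleting a small number of carefully chosen edges so that every vertex-degree becomes odd, and invoke Thomason's lollipop argument: in an odd-degree graph, the Hamiltonian cycles through any fixed edge come in pairs, so the image of $\CC$ forces a second near-Hamiltonian cycle in the auxiliary graph which, after undoing the modification, costs us at most $O(\ell)$ vertices.

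The main obstacle is calibrating the two ingredients so they meet cleanly at the threshold $\ell = n^{4/5}$: the size $k$ of the nested or crossing family extracted by Dilworth trades off against the $O(n/k)$ splicing loss, and the rigid-regime case must survive $O(\ell)$ perturbations without breaking the parity identity. I expect most of the technical work to lie in extending Thomason's identity to the approximately-odd-degree setting and in quantifying exactly how close to a regularly spaced matching $X$ must be before the constructive side kicks in; it is this quantitative interplay between the two regimes that presumably forces the $4/5$ exponent.
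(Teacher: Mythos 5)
Your high-level plan---a dichotomy between a constructive Dilworth/poset argument and a non-constructive Thomason-style parity argument---is indeed the architecture of the paper's proof. However, the details you supply for both halves contain genuine gaps, and the criterion you propose for switching between the two regimes is not the right one.

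First, in the poset-based step you claim that a \emph{nested} chain of $k$ chords (a chain in the containment order on short arcs) can be ``spliced'' into a cycle missing only $O(n/k)$ vertices by a zig-zag. This is false as stated: a nested chain $e_1 \subsetneq e_2 \subsetneq \dots \subsetneq e_k$ controls neither the size of the interior of $e_1$ nor the size of the exterior of $e_k$, and a zig-zag cycle using all $k$ chords still omits both of these regions, which can each contain $\Omega(n)$ vertices. This is exactly why the paper introduces a \emph{second} poset $\QQ_S$ (chords linearly ordered by \emph{both} endpoints) alongside the containment poset $\PP_S$: a long chain in $\QQ_S$ gives a chord with small exterior, and a joint antichain gives pairwise-interlacing chords, but a chain in $\PP_S$ alone gives nothing directly. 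The paper then has to extract tight triples from such chains and argue further (finding two tight triples whose middle chords interlace) before a short cycle appears. None of this machinery is visible in your sketch, and without it the constructive half does not close.

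Second, your dichotomy criterion---``the poset step can fail only if $X$ is quantitatively close to a perfect matching with nearly regular spacing''---is not the right structural statement, and you offer no argument for it. The paper's dichotomy is on the maximum number $m$ of pairwise-independent interlacing chord pairs: if $m$ is large, the poset argument (Lemma~\ref{manyinter}) applies; if $m$ is small, one deletes the few chords involved and is left with a graph whose chords are \emph{pairwise parallel}---a laminar family---which is very far from a regular matching (it can be heavily nested, for instance). It is this laminar structure, not approximate regularity, that the parity half of the argument exploits.

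Third, and most seriously, your parity-based step is not the argument that works here. You propose to double or delete edges to make every degree odd and then invoke a lollipop argument. But the formulation of Thomason's argument that the proof relies on (Proposition~\ref{lollipop}, due to Thomassen) does not ask for odd degrees; it asks for a set $X$ that is \emph{independent along $\CC$} and \emph{dominates $V(G)\setminus X$ via chords}. The entire technical weight of the non-constructive half is in constructing such a dominating set $S$ so that, after a bounded number of contractions along $\CC$, $P\cup S$ becomes independent while remaining dominating---and in showing that the total number of contractions (hence the number of vertices lost when lifting the second Hamiltonian cycle back) is $O(n^{4/5})$. This is the content of Lemma~\ref{manypar}, a delicate induction on the number of minimal chords which carefully tracks how many consecutive pairs of $\CC$ end up in $S$. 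Your ``make degrees odd and undo at the end'' sketch does not engage with this at all: there is no mechanism by which a second Hamiltonian cycle in an edge-doubled multigraph can be pulled back to a long cycle of $G$, since the new cycle can freely use the doubled edges, and deleting edges can destroy the Hamiltonian cycle $\CC$ or the minimum degree condition. So this half of your proposal is not a gap so much as a different and broken idea.

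In short: your opening reduction (chords of small span give a long cycle immediately) is fine, and the two-track strategy is correctly identified, but the actual content of both tracks---the second poset, tight triples, and the interlacing-middle-chord lemma on the constructive side; the dominating-set construction and its bookkeeping on the parity side---is missing, and what you have sketched in its place would not survive being written out.
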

To provide some context for Theorem~\ref{mainthm}, we remind the reader of the most famous open problem in the area; the following long outstanding conjecture is due to Sheehan~\citep{sheehan}.
\begin{conjecture}\label{conj:shee}
For each integer $d\ge3$, every $d$-regular Hamiltonian graph contains a second Hamiltonian cycle. 
\end{conjecture}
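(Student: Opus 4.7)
The plan is to split by the parity of $d$. For odd $d$, I would appeal to the classical ``lollipop'' argument of Thomason. Fix any edge $uv$ of $G$, and let $\mathcal{L}$ be the graph whose vertices are the Hamiltonian paths of $G$ beginning with the edge $uv$, with two such paths joined whenever one is obtained from the other by a single rotation at the endpoint $w$ opposite $u$. Each such path $P$ ending at a vertex $w$ has degree $\deg_G(w)-2 = d-2$ in $\mathcal{L}$ if $uw \in E(G)$, and $d-1$ otherwise. Since $d$ is odd, the odd-degree vertices of $\mathcal{L}$ are precisely those $P$ whose endpoint $w$ is adjacent to $u$, and these correspond bijectively to the Hamiltonian cycles of $G$ containing the edge $uv$. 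The handshake lemma applied to $\mathcal{L}$ forces the number of such cycles to be even, so the existence of $\CC$ guarantees a second.

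For even $d\ge 4$ the parity argument breaks down and the conjecture enters its genuinely open range. My plan here would be to bootstrap from Theorem~\ref{mainthm}: it produces a second cycle $C' \neq \CC$ of length at least $n - cn^{4/5}$, with defect set $D := V(G) \setminus V(C')$ of size $O(n^{4/5})$. Since $G$ is $d$-regular, every $v \in D$ has all $d$ of its neighbours somewhere in $G$, and for $n$ large the overwhelming majority lie on $C'$. The key step would be an \emph{absorption lemma}: whenever $|D|\ge 1$, one can either find $v \in D$ together with consecutive $x, y$ on $C'$ with $xv, yv \in E(G)$ (splicing $v$ into $C'$ by replacing the edge $xy$ with the path $xvy$), or perform a rotation of $C'$ to a new second cycle $C''$ of the same length on which such a consecutive pair becomes available. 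Iterating this step $|D|$ times would eliminate the defect entirely and produce a second Hamiltonian cycle.

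The main obstacle is precisely this absorption lemma. Its first half --- a counting input guaranteeing that each defect vertex has many neighbours on $C'$ --- is straightforward from $d$-regularity. The second half is not: it asks us to rule out the adversarial configuration in which, for every second cycle of near-maximal length, every $v \in D$ has its $C'$-neighbours arranged to avoid every consecutive pair along $C'$. The poset-rotation and parity ingredients underlying Theorem~\ref{mainthm} only see the global existence of \emph{some} long second cycle and provide no leverage on the fine placement of defect-neighbours along it; for even $d$ (and especially $d=4$) an adversarial $d$-regular graph can plausibly keep $D$ from ever emptying through local modifications, which is precisely why Conjecture~\ref{conj:shee} has remained open in this range for half a century. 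Any genuine attack on the even case would need a structural input beyond the combination of constructive posets and non-constructive parity used so far, for instance a bound on the second-longest cycle that is tight enough to force $|D| \le 1$ and thus be absorbable in a single step.
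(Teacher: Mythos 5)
The statement you were asked about is labelled as a \emph{conjecture} in the paper, and it is in fact Sheehan's conjecture, which remains open; the paper does not claim or contain a proof of it. What the paper proves is Theorem~\ref{mainthm}, an asymptotic relaxation: a second cycle of length $n - O(n^{4/5})$ under the weaker hypothesis $\delta(G) \ge 3$. So there is no ``paper's own proof'' to compare your attempt against.

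Within that framing, your write-up is essentially correct as an account of the status of the problem rather than a proof. The odd-$d$ half you give is a faithful sketch of Thomason's lollipop argument: the auxiliary graph on $uv$-rooted Hamiltonian paths has odd-degree vertices exactly at those paths whose free endpoint is adjacent to $u$, and the handshake lemma yields an even number of Hamiltonian cycles through $uv$, hence a second one. (One small caution: to make the degree count airtight you should fix not just the edge $uv$ but the \emph{oriented} edge, and note that the path equal to $\CC \setminus uv$ contributes one odd-degree vertex, so the count of odd-degree vertices other than this one is odd, giving the second cycle. Your phrasing is close but slightly loose on this bookkeeping.) For even $d$ you correctly diagnose that the parity argument collapses, and your proposed absorption bootstrap from Theorem~\ref{mainthm} does not go through: Theorem~\ref{mainthm} gives no control over where the $O(n^{4/5})$ missing vertices sit relative to the new cycle, and there is no mechanism in the paper's poset-plus-parity machinery that forces the adversarial configuration you describe to be avoidable. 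Your own conclusion --- that a genuinely new structural input is required --- is the honest and accurate assessment; indeed this is precisely why the conjecture is still open for even $d$ outside the large-$d$ regime handled by Thomassen. In short: you have not proved the conjecture, but you were not expected to, and your analysis of why the obvious bootstrap fails is sound.
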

Conjecture~\ref{conj:shee} was proposed as an extension of the classical result of Smith, see~\citep{tutte}, that establishes the above conjecture in the case where $d = 3$. Sheehan's conjecture was subsequently shown to hold for all odd $d\ge 3$ by Thomason~\citep{thomason} using a beautiful, non-constructive, parity-based argument, and for all $d\ge 300$ by Thomassen~\citep{thomassen1, thomassen2} using an ingenious combination of Thomason's argument and the Lov\'asz local lemma. We refer the reader to the paper of Haxell, Seamone and Verstra\"ete~\citep{haxell} for both the current state of the art as well as a discussion of why existing methods are unlikely to settle Conjecture~\ref{conj:shee} in its full generality.

In the light of Sheehan's conjecture, it is natural to ask if regularity is genuinely necessary to force the existence of a second Hamiltonian cycle, or if a weaker condition on the minimum degree, say, might suffice instead. In particular, the following question suggests itself: does every Hamiltonian graph $G$ with $\delta(G) \ge 3$ contain a second Hamiltonian cycle? Entringer and Swart~\citep{unique} answered this question negatively by constructing infinitely many Hamiltonian graphs without a second Hamiltoninan cycle, all with minimum degree $3$. While the Hamiltonian graphs with minimum degree $3$ constructed by Entringer and Swart only contain a single Hamiltoninan cycle each, these graphs do contain other long cycles that almost span the entire vertex set;  it is therefore natural to ask if such a situation is unavoidable in general.

\begin{problem}\label{mainconj}
If an $n$-vertex graph $G$ with $\delta(G) \ge 3$ contains a Hamiltonian cycle, then must $G$ contain another cycle of length $n-o(n)$?
\end{problem}

Of course, Problem~\ref{mainconj} is closely related to Conjecture~\ref{conj:shee} since an affirmative answer to the above question would assert precisely that an asymptotic form of Sheehan's conjecture holds under significantly milder degree conditions than the regularity restrictions prescribed in Conjecture~\ref{conj:shee}; our main result furnishes, in a quantitative form, precisely such an affirmative answer.

Perhaps the most interesting aspect of Theorem~\ref{mainthm} is the fact that its proof is based on a combination of constructive and non-constructive arguments: to prove our main result, we use poset-based techniques and parity-based arguments in conjunction with each other, so our methods might be of independent interest.
 
This paper is organised as follows. We first introduce some notation and collect together the tools that we need for the proof of our main result in Section~\ref{s:pre}. We then prove Theorem~\ref{mainthm} in Section~\ref{s:proof}. Finally, we conclude in Section~\ref{s:conc} with a discussion of some open problems.

\section{Preliminaries}\label{s:pre}
It will be convenient to begin by establishing some notation for dealing with Hamiltonian graphs.

Given a graph $G$ with a designated Hamiltonian cycle $\CC$, we shall always fix one of the two possible cyclic orderings of $V(G)$ obtained be traversing $\CC$ to be canonical. Therefore, when we speak, for example, about following $\CC$ from $x$ to $y$ for $x,y \in V(G)$, we mean this with respect to the canonical ordering. We use $\cycprec$ to specify relative positions with respect to the canonical ordering, so for instance, given $x,y,z \in V(G)$, we write $x \cycprec y \cycprec z$ (or equivalently either $y \cycprec z \cycprec x$ or $z \cycprec x \cycprec y$) to mean that we encounter $x$, $y$ and $z$ in that order around $\CC$. Finally, for $x, y \in V(G)$, we write $d_\CC (x,y)$ for the length of the path from $x$ to $y$ around $\CC$ following the canonical ordering, noting that $d_\CC (x,y) \ne d_\CC (y,x) $ in general.

Let $G$ be a graph with a designated Hamiltonian cycle $\CC$. Any cycle of $G$ distinct from $\CC$ is said to be \emph{nontrivial}. We call any edge of $G$ not in $\CC$ a \emph{chord}. Observe that there exist two subsets of the vertex set of $G$ corresponding to each chord $e$ of $G$, namely the vertex sets of the two paths traversing $\CC$ between the endpoints of $e$; we call these two sets of vertices the two \emph{domains} of $e$, and note that the domains of $e$ intersect precisely in the endpoints of $e$. We say that a chord $e$ is \emph{minimal} if at least one of its domains induces no chords of $G$ other than $e$ itself, and we call the corresponding domain of $e$ its \emph{minimal domain}; here, if both domains of $e$ induce no chords, then we arbitrarily choose one these domains to be the minimal domain of $e$. We say that a pair of chords \emph{interlace} if their endpoints are all distinct and appear in alternating order around $\CC$ (in the canonical ordering of the vertex set, say); otherwise, we say that they are \emph{parallel}. Also, we say that a set of chords is \emph{independent} if no two of the chords in the set share an endpoint. Finally, we say that two vertices $x,y \in V(G)$ are \emph{chord-adjacent} if they are connected by a chord of $G$.

Next, we collect together some tools that we shall require for the proof of our main result. 

To handle the constructive half of our argument, we shall require a well-known consequence of a classical result of Dilworth~\citep{dill}. Recall that in a partially ordered set (or poset for short), a \emph{chain} is a subset in which each pair of elements is comparable (which makes a chain a linearly ordered set), and an \emph{antichain} is a subset in which no two elements are comparable; we need the following fact.

\begin{proposition}\label{dilworth}
For $r,s \in \N$, every poset of size $rs$ contains either a chain of size $r$ or an antichain of size $s$. \qed
\end{proposition}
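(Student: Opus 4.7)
The plan is to prove this via the classical pigeonhole argument attributed to Mirsky, which is arguably cleaner here than invoking Dilworth's theorem directly. The idea is to assign to each element of the poset a rank recording the length of a longest chain ending there, and then observe that elements of equal rank must be pairwise incomparable.

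Concretely, let $P$ be the poset, with $|P| = rs$. For each $x \in P$, define $\ell(x)$ to be the maximum size of a chain in $P$ whose largest element is $x$ (so $\ell(x) \ge 1$, with $\ell(x) = 1$ when $x$ is minimal). If there exists $x \in P$ with $\ell(x) \ge r$, we have found a chain of size $r$ and we are done; so assume henceforth that $\ell$ takes values in $\{1, 2, \dots, r-1\}$.

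Since $|P| = rs$ and $\ell$ takes at most $r-1$ distinct values, the pigeonhole principle yields some value $k \in \{1, \dots, r-1\}$ with $|\ell^{-1}(k)| \ge \lceil rs/(r-1) \rceil \ge s$. I claim $A := \ell^{-1}(k)$ is an antichain: if $x, y \in A$ were comparable, say $x < y$, then prepending any chain of maximum size ending at $x$ to $y$ would give a chain of size $\ell(x) + 1 = k+1$ ending at $y$, contradicting $\ell(y) = k$. Hence $A$ is an antichain of size at least $s$, completing the proof.

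There is no real obstacle here; the only thing to be careful about is the direction of the comparability argument, i.e.\ making sure the chain ending at $x$ can genuinely be extended by $y$ when $x < y$, which is immediate from the definition of $\ell$.
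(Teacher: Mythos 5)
Your proof is correct. The paper itself gives no proof for this proposition; it is stated with a \textsc{qed} box as a ``well-known consequence of a classical result of Dilworth,'' i.e.\ the reader is expected to derive it from Dilworth's chain-decomposition theorem (if every antichain has size $< s$, partition $P$ into at most $s-1$ chains and apply pigeonhole to find one of size $\ge rs/(s-1) > r$). You instead give a self-contained argument via the dual, Mirsky-style route: the rank function $\ell$ partitions $P$ into at most $r-1$ antichains once chains of size $r$ are excluded, and pigeonhole produces the large antichain. This is genuinely a different decomposition (into antichains rather than chains), and it buys you something concrete: Mirsky's antichain partition has a one-paragraph elementary proof, exactly the one you wrote, whereas Dilworth's chain partition requires a real induction or a detour through K\H{o}nig's theorem. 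Your version is thus more elementary and self-contained than what the paper appeals to, at no cost in length. One small thing worth noting explicitly: when $r = 1$ you are done immediately since $\ell(x) \ge 1 = r$ for any element $x$, so the later division by $r-1$ never occurs in a problematic case; you implicitly handle this but it is worth flagging.
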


The non-constructive half of our argument depends on the following convenient formulation, due to Thomassen~\citep{thomassen3}, of the parity-based `lollipop argument' of Thomason~\citep{thomason}. Recall that a set $X$ of vertices \emph{dominates} another set $Y$ of vertices and edges in a graph if each vertex in $Y$ is adjacent to some vertex in $X$ and if each edge in $Y$ is incident to some vertex in $X$.

\begin{proposition}\label{lollipop}
Let $G$ be a graph with a designated Hamiltonian cycle $\CC$. If there exists a set $X \subset V(G)$ such that
\begin{enumerate}
\item $X$ is independent in the graph $G' = (V(G), E(\CC))$, and
\item $X$ dominates $V(G) \setminus X$ in the graph $G'' = (V(G), E(G) \setminus E(\CC))$,
\end{enumerate}
then $G$ contains a nontrivial Hamiltonian cycle. \qed
\end{proposition}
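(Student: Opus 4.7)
The plan is to use a parity argument of the sort introduced by Thomason, engineered so that the structural hypotheses on $X$ play the role of odd-regularity in the classical argument. Specifically, I would build an auxiliary graph $L$ whose vertex set consists of certain Hamiltonian paths of $G$ and whose edges correspond to rotation operations, in such a way that the handshake lemma forces the number of Hamiltonian cycles of $G$ through a chosen edge of $\CC$ to be even. Since $\CC$ is one such cycle, a second Hamiltonian cycle (that is, a nontrivial Hamiltonian cycle) follows automatically.

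Concretely, I would fix $x_0 \in X$ and let $a$ be the $\CC$-successor of $x_0$; by condition~(1) we have $a \notin X$. Let $\mathcal{P}$ denote the set of Hamiltonian paths $P = v_1 v_2 \cdots v_n$ of $G$ with $v_1 = x_0$ and $v_2 = a$, noting that $\CC$ itself determines a distinguished member $P_\CC \in \mathcal{P}$ (obtained by deleting the other $\CC$-edge at $x_0$). Place an edge in $L$ between $P$ and each ``rotated'' path $v_1 \cdots v_i v_n v_{n-1} \cdots v_{i+1}$ obtained by replacing $v_i v_{i+1}$ with $v_i v_n$, whenever $v_i v_n \in E(G)$, subject to an $X$-dependent filter (restricting, for instance, to rotation targets $v_i$ lying in $X$, or to rotations along chords rather than along $\CC$-edges). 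The key observation is that Hamiltonian cycles of $G$ through the edge $x_0 a$ correspond bijectively to those $P \in \mathcal{P}$ whose endpoint $v_n$ is adjacent to $x_0$.

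The heart of the argument is then a case analysis computing $\deg_L(P)$ modulo two in terms of whether the endpoint $v_n$ and its path-predecessor $v_{n-1}$ lie in $X$, and whether $v_n$ is adjacent to $x_0$. Here condition~(2) guarantees that non-$X$ endpoints always admit a rotation into $X$, preventing degenerate configurations, while condition~(1) ensures that certain boundary vertices near the end of the path cannot simultaneously lie in $X$, eliminating problematic double-counting. With the rotation filter chosen appropriately, the outcome is that $\deg_L(P)$ is even unless $v_n x_0 \in E(G)$, in which case it is odd. Summing and invoking handshake then yields an even number of Hamiltonian cycles through $x_0 a$, and since $\CC$ is one of them, a second one must exist.

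The main obstacle is the precise engineering of the rotation filter and the ensuing parity bookkeeping: conditions~(1) and~(2) are tailor-made to make this calculation work, and isolating the correct filter requires a careful case analysis of the different configurations near the endpoints of the Hamiltonian path. Once the parity claim is established, however, handshake finishes the argument in one line.
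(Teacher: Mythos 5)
The paper does not prove Proposition~\ref{lollipop}: it is quoted with a \verb|\qed| and attributed to Thomassen~\citep{thomassen3}, so there is no in-paper proof to compare against. Judged on its own, your outline correctly identifies the high-level strategy that Thomassen's proof does use --- a Thomason-style parity argument on a rotation graph of Hamiltonian paths rooted at a fixed vertex $x_0\in X$ and its $\CC$-successor $a$, followed by a handshake count of the odd-degree vertices. However, what you have written is a plan, not a proof, and the gap is exactly where the mathematical content of the theorem lives.

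Concretely, the proposal leaves the rotation rule unspecified (``subject to an $X$-dependent filter\ldots restricting, for instance, to rotation targets $v_i$ lying in $X$, or to rotations along chords'') and then simply asserts ``with the rotation filter chosen appropriately, the outcome is that $\deg_L(P)$ is even unless $v_nx_0\in E(G)$.'' That assertion \emph{is} the theorem. The reason it is nontrivial: after the natural cleanup (keep, for each $v\notin X$, a single chord into $X$ and delete every other chord, which preserves hypotheses~(1) and~(2)), the endpoint $v_n$ can still land in $X$, and a vertex of $X$ may have arbitrary degree, so the naive lollipop count $d(v_n)-1-[v_nx_0\in E]$ has unpredictable parity there. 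Ruling this out requires designing a rotation rule that is symmetric (each admissible rotation must be admissible in reverse) and that makes every non--cycle-closing path have even degree; the obvious candidates --- restricting to rotation pivots in $X$, or to chord rotations, or forbidding rotations at $X$-endpoints --- each fail one of symmetry or parity without further ideas. You flag precisely this as ``the main obstacle,'' which is an honest acknowledgement that the core step has not been carried out. Until a concrete filter is exhibited and the resulting parity computation is verified case by case (using both that $X$ is $\CC$-independent and that $X$ chord-dominates its complement), the argument is incomplete.
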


Finally, we use standard asymptotic notation throughout to suppress absolute constants, and for the sake of clarity of presentation, we systematically omit floor and ceiling signs whenever they are not crucial.

\section{Proof of the main result}\label{s:proof}
We begin with the following lemma that allows us to handle Hamiltonian graphs with many interlacing chords.

\begin{lemma}\label{manyinter}
Let $G$ be an $n$-vertex graph with a designated Hamiltonian cycle $\CC$. If $G$ contains a set $I$ of $2m$ independent chords made up of $m$ interlacing pairs for some $m \ge 1$, then $G$ contains a nontrivial cycle missing $O(n/m^{1/3})$ vertices.
\end{lemma}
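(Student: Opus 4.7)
My plan is to apply Proposition~\ref{dilworth} to the chords in $I$ with respect to the \emph{nesting} partial order, in which two chords are comparable if and only if they are parallel (so incomparable pairs are precisely interlacing pairs), and to combine this with pigeonhole on endpoint-gaps along $\CC$. Choosing $s = (2m)^{1/3}$ and $r = (2m)^{2/3}$ so that $rs = 2m$, Proposition~\ref{dilworth} furnishes either an antichain of $s$ pairwise interlacing chords or a chain of $r$ pairwise nested chords. In the antichain case, the $s$ chords admit the standard ``fan'' labelling $e_i = (a_i, c_i)$ in cyclic order $a_1 \cycprec \cdots \cycprec a_s \cycprec c_1 \cycprec \cdots \cycprec c_s$, and since
\[
\sum_{i=1}^{s-1} \bigl( d_{\CC}(a_i, a_{i+1}) + d_{\CC}(c_i, c_{i+1}) \bigr) \le 2n,
\]
some index $i$ has this sum at most $2n/(s-1)$; using $e_i$ and $e_{i+1}$ together with the two long arcs from $a_{i+1}$ to $c_i$ and from $c_{i+1}$ to $a_i$ produces a nontrivial cycle that avoids only the internal vertices of the two short arcs $(a_i, a_{i+1})$ and $(c_i, c_{i+1})$, hence missing $O(n/s) = O(n/m^{1/3})$ vertices.

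For the chain case, let $e_1 \subset \cdots \subset e_r$ be the $r$ nested chords; since the two members of each $I$-pair interlace while chain members do not, each $e_i$ belongs to a distinct $I$-pair and therefore has a distinct interlacing partner $f_i \in I$. I would then apply Proposition~\ref{dilworth} a second time, to $\{f_1, \ldots, f_r\}$ with $\sqrt{r} \cdot \sqrt{r} = r$. Either we extract $\sqrt{r} = (2m)^{1/3}$ pairwise interlacing partners -- in which case the antichain fan argument applies directly to these partners to produce a cycle missing $O(n/\sqrt{r}) = O(n/m^{1/3})$ vertices -- or we extract $\sqrt{r}$ pairwise nested partners, yielding $\sqrt{r}$ interlacing pairs both of whose chord families form nested chains. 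In the latter subcase, pigeonhole on either of the two nested chains yields consecutive indices $j, j+1$ with combined gap $O(n/\sqrt{r})$, and one builds a cycle from the four chords $e_{i_j}, f_{i_j}, e_{i_{j+1}}, f_{i_{j+1}}$ together with an appropriate alternating selection among the eight arcs these chords delimit, missing only $O(n/\sqrt{r}) = O(n/m^{1/3})$ vertices.

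The principal obstacle is the doubly-nested subcase above: one must verify that the four-chord construction actually produces a \emph{single} connected cycle (rather than two disjoint cycles, as can occur with nested chords and the ``wrong'' alternating choice of arcs), and that the pigeonhole bounds on the two nested chains simultaneously control the count of missed vertices. This verification calls for a careful case analysis of the cyclic ordering of the eight relevant endpoints, which in turn depends on the positions of each partner's two endpoints relative to its corresponding chain chord's two domains.
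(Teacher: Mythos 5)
There is a genuine gap at the very first step. You assert that in the ``nesting'' order, incomparability is equivalent to interlacing, so that a Dilworth antichain consists of pairwise interlacing chords and a Dilworth chain consists of nested chords. This is false: two parallel chords need not be nested --- they can be \emph{disjoint} (lying ``side by side'' around $\CC$), and such a pair is parallel yet incomparable in the nesting order. Consequently an antichain in the nesting poset may well contain disjoint chords, in which case no fan labelling $a_1\cycprec\cdots\cycprec a_s\cycprec c_1\cycprec\cdots\cycprec c_s$ exists and your pigeonhole on endpoint-gaps breaks down. Worse, no single poset can repair this: the parallelism relation on a chord diagram is the complement of the interlacement (circle) graph, and this complement is not in general a comparability graph --- for instance, there is a diagram of five chords whose interlacement graph, and hence whose parallelism graph, is $C_5$, which admits no transitive orientation. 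The paper's proof deals with exactly this obstruction by introducing a \emph{second} poset $\QQ$ that captures the disjoint/linearly-separated case, and then applying Dilworth twice: an antichain in \emph{both} $\PP$ and $\QQ$ is genuinely a set of pairwise interlacing chords, while a chain in $\QQ$ directly yields a chord with small exterior. You would need something analogous; as written, the dichotomy on which your whole proof is built does not hold.

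The second gap is the one you flag yourself: the ``doubly-nested'' subcase. You correctly sense that this is the crux, but your sketch does not resolve it, and it is not a routine verification. The paper devotes most of the proof to exactly this situation, introducing the notion of a tight triple, proving that any sufficiently large subset of $I$ contains many pairwise disjoint tight triples, and then carrying out a genuinely delicate case analysis showing that two tight triples with interlacing middle chords always give a long nontrivial cycle --- the cases include nested-inside-strip, nested-outside-strip, and shared-chord configurations, and in some of them one must use all four available chords to avoid producing two disjoint cycles. Your plan of ``pigeonhole on the two nested chains simultaneously and take four chords'' has the right flavour but leaves the actual cycle construction unverified, and in particular does not explain why the chosen arcs connect into a single cycle rather than two. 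So the proposal identifies the right obstruction but does not overcome it, and the foundational Dilworth step needs the two-poset fix before the rest of the argument can even begin.
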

\begin{proof} 
Note that if $G$ has at least one chord, then $G$ contains a nontrivial cycle. In what follows, we therefore suppose, as we may, that $m$ is sufficiently large.
We shall show, assuming $m$ is suitably large, that it is possible to  construct a cycle of the required length using at most $4$ chords of $G$ and the edges of $\CC$.
	
We begin by constructing two posets on any set $S$ of independent chords in $G$ as follows. We fix some edge $f$ of $\CC$, and for a chord $e$ of $G$, we call the domain of $e$ containing the endpoints of $f$ the \emph{interior} of $e$, and the other domain the \emph{exterior} of $e$. We then define a partial order $\PP_S$ on $S$ by saying $e_1 < e_2$  for $e_1 , e_2 \in S$ if the interior of $e_1$ is contained in the interior of $e_2$. Next, we fix a linear order $\mathcal{L}$ of the vertices of $G$ by starting at one of the endpoints of $f$ and following $\CC$ to the other endpoint of $f$, and then define another poset $\QQ_S$ on $S$ by saying that $e_1 < e_2$ for $e_1, e_2 \in S$ if both the endpoints of $e_1$ precede both the endpoints of $e_2$ in $\mathcal{L}$.

The following observation guarantees the existence of a large set of chords with useful structural properties.

\begin{claim}\label{poset}
For any $K>0$, given a set $S$ of independent chords in $G$ of size $Km$, we may find either
\begin{enumerate}
\item a chain in $\PP_S$ of size $Km^{1/3}$,
\item a chain in $\QQ_S$ of size $m^{1/3}$, or
\item an antichain in both $\PP_S$ and $\QQ_S$ of size $m^{1/3}$.
\end{enumerate}
Moreover, in either of the latter two cases, we may find a nontrivial cycle of length at least $n-n/m^{1/3}$ in $G$.
\end{claim}
\begin{proof}
By Proposition~\ref{dilworth}, we see that $\PP_S$ contains either a chain of size $Km^{1/3}$ or an antichain of size $m^{2/3}$. Applying Proposition~\ref{dilworth} again to such an antichain if it exists, we see that either $\QQ_S$ contains a chain of size $m^{1/3}$, or there exists an antichain in both $\PP_S$ and $\QQ_S$ of size $m^{1/3}$. 

If $\QQ_S$ contains a chain of size $m^{1/3}$, then it is easy to see that this chain contains a chord whose exterior contains at most $n/m^{1/3}$ vertices, in which case we are done.

If there exists an antichain in both $\PP_S$ and $\QQ_S$ of size $m^{1/3}$, then it is clear that this antichain consists of pairwise interlacing chords. We may then find, using the pigeonhole principle, chords $uv$ and $xy$ in this antichain with $u \cycprec x \cycprec v \cycprec y$ such that $d_\CC (u, x) + d_\CC (v, y) \le n/m^{1/3}$, in which case we are again done.
\end{proof}

For the rest of the proof, we restrict our attention to the set $I$ and the poset $\PP = \PP_I$; in what follows, any ordering of chords in $I$ will implicitly mean their ordering in $\PP$.
Furthermore, we may assume going forwards that in any set $S \subset I$ of size at least $m/8$, there exists a chain in $\PP$ of size at least $m^{1/3}/8$; indeed, we are done by Claim~\ref{poset} if this is not the case.

We say that a triple $\{u_1v_1<u_2v_2<u_3v_3\}$ of independent chords in $I$ with $u_1\cycprec u_2\cycprec u_3 \cycprec v_3\cycprec v_2\cycprec v_1$ is \emph{tight} if 
\[d_\CC (u_1, u_3) + d_\CC (v_3, v_1) \le 24n/m^{1/3}.\] 
This definition of a tight triple is motivated by the following observation.

\begin{claim}\label{interlacingmiddle}
If $G$ contains two tight triples whose middle chords interlace, then $G$ contains a nontrivial cycle of length at least $n-48n/m^{1/3}$. 
\end{claim} 
\begin{proof}
This claim follows from a somewhat tedious analysis of a few different cases; this analysis requires us to establish some notation first. For a tight triple $U = \{u_1v_1<u_2v_2<u_3v_3\}$ with $u_1\cycprec u_2\cycprec u_3 \cycprec v_3\cycprec v_2\cycprec v_1$, we say that a vertex \emph{lies inside the strip} of $U$ if it lies either on the path $P(u_1, u_3)$ between $u_1$ and $u_3$ in $\CC$ containing $u_2$, or on the path $P(v_3, v_1)$ between $v_3$ and $v_1$ in $\CC$ containing $v_2$.

Suppose that $T_1=\{u_1v_1<u_2v_2<u_3v_3\}$ with $u_1\cycprec u_2\cycprec u_3 \cycprec v_3\cycprec v_2\cycprec v_1$ and $T_2=\{x_1y_1<x_2y_2<x_3y_3\}$ with $x_1\cycprec x_2\cycprec x_3 \cycprec y_3\cycprec y_2\cycprec y_1$ are two tight triples whose middle chords $u_2v_2$ and $x_2y_2$ interlace. 

Assume first that $T_1$ and $T_2$ are not disjoint, and say $u_1v_1=x_1y_1$ with $u_1=x_1$ and $v_1=y_1$. Suppose, as we may, that $u_1\cycprec x_2\cycprec u_2$; we then obtain a cycle using the chords $u_2v_2$ and $x_2y_2$ missing at most 
\[d_\CC (u_1, u_3) + d_\CC (y_3, y_1) \le 48n/m^{1/3}\] 
vertices of $G$, as required.

Therefore, we may suppose that $T_1$ and $T_2$ are disjoint. Suppose first that $x_2$ and $y_2$ lie inside the strip of $T_1$. If both $x_2$ and $y_2$ lie on $P(u_1, u_3)$, then we obtain a cycle using just the chord $x_2y_2$ missing at most $d_\CC (u_1, u_3) \le 24n/m^{1/3}$ vertices. If $x_2$ lies on $P(u_1, u_3)$ and $y_2$ lies on $P(v_3, v_1)$ on the other hand, then we obtain a cycle using the  chords $u_2v_2$ and $x_2y_2$ missing at most 
\[d_\CC (u_1, u_3) + d_\CC (v_3, v_1) \le 24n/m^{1/3}\] 
vertices of $G$.

Therefore, suppose that $x_2$ lies outside the strip of $T_1$ and that $u_2$ lies outside the strip of $T_2$. Suppose without any loss of generality that  $u_2\cycprec u_3\cycprec x_2\cycprec v_3\cycprec v_2$ and $y_2\cycprec y_1\cycprec u_2\cycprec x_1\cycprec x_2$, so either $u_2\cycprec u_3\cycprec x_1\cycprec x_2$ or $u_2\cycprec x_1\cycprec u_3\cycprec x_2$. 

First, suppose that $u_2\cycprec u_3\cycprec x_1\cycprec x_2$, in which case, both $u_2v_2$ and $u_3v_3$ interlace with both $x_1y_1$ and $x_2y_2$. We may then obtain a cycle using the chords $u_2v_2$, $u_3v_3$, $x_1y_1$ and $x_2y_2$ missing at most 
\[d_\CC (u_1, u_3) + d_\CC (v_3, v_1) + d_\CC (x_1, x_3) + d_\CC (y_3, y_1) \le 48n/m^{1/3}\]
vertices of $G$.

Now, suppose that $u_2\cycprec x_1\cycprec u_3\cycprec x_2$. If $u_3\cycprec v_3\cycprec x_3$, then we obtain a cycle using the chord $u_3v_3$ missing at most $d_\CC (x_1, x_3) \le 24n/m^{1/3}$ vertices. Therefore, suppose that $u_3\cycprec x_3\cycprec v_3$. If $y_3\cycprec v_2\cycprec y_2$, then we obtain a cycle using the chords $u_2v_2$ and $x_2y_2$ missing at most 
\[d_\CC (u_2, x_2) + d_\CC (v_2, y_2) \le d_\CC (u_1, u_3) + d_\CC (x_1, x_3) + d_\CC (y_3, y_1) \le 48n/m^{1/3}\]
vertices. Hence, suppose that $v_2\cycprec y_3\cycprec y_2$, so that both $u_2v_2$ and $u_3v_3$ interlace with both $x_2y_2$ and $x_3y_3$. In this case, we obtain a cycle using the chords $u_2v_2$, $u_3v_3$, $x_2y_2$ and $x_3y_3$ missing at most 
\[d_\CC (u_1, u_3) + d_\CC (v_3, v_1) + d_\CC (x_1, x_3) + d_\CC (y_3, y_1) \le 48n/m^{1/3}\]
vertices of $G$.
\end{proof}

Continuing the proof of Lemma~\ref{manyinter}, recall our assumption that in any set $S \subset I$ of size at least $m/8$, there exists a chain in $\PP$ of size at least $m^{1/3}/8$. This assumption implies that there are many pairwise disjoint tight triples in $I$, as we demonstrate below.

\begin{claim}\label{closetriple}
For $K\ge 1/2$, any set $S \subset I$ of size $Km$ contains $Km/4$ pairwise disjoint tight triples. 
\end{claim}
\begin{proof}
We shall show that given any collection $T$ of at most $Km/4$ pairwise disjoint tight triples from $S$, we may find a tight triple from the remaining chords in $S$ which is pairwise disjoint from each of the tight triples in $T$. We know that $S$ contains a subset $S'$ of at least $Km - 3Km/4 \ge Km/4 \ge m/8$ chords none of which appear in any of the triples in $T$. By our assumption, we know that $S'$ contains a chain $u_1v_1<u_2v_2<\dots<u_kv_k$ of size $k = m^{1/3}/8 \ge 6$ in $\PP$ with $u_1\cycprec u_2\cycprec \dots \cycprec u_k\cycprec v_k\cycprec v_{k-1}\cycprec \dots \cycprec v_1$. By considering a partition of $\CC$ into paths with endpoints in $\{u_1,u_3,\dots,v_1,v_3,\dots\}$, we have
\[\sum_{i=1}^{\lceil k/2 \rceil-1} \left(d_\CC (u_{2i-1}, u_{2i+1}) + d_\CC (v_{2i+1}, v_{2i-1})\right) \le n,\]
so there exists an index $1 \le i \le \lceil k/2 \rceil-1$ such that
\[d_\CC (u_{2i-1}, u_{2i+1}) + d_\CC (v_{2i+1}, v_{2i-1}) \le \frac{n}{k/2 - 1} \le \frac{3n}{k} = \frac{24n}{m^{1/3}};\]
this implies that the triple $\{u_{2i-1}v_{2i-1} < u_{2i}v_{2i} < u_{2i+1}v_{2i+1}\}$ is tight, proving the claim.
\end{proof}

We may now finish the proof of Lemma~\ref{manyinter} as follows. By Claims~\ref{interlacingmiddle} and~\ref{closetriple}, we see that $I$ contains $m/2$ pairwise disjoint tight triples whose middle chords are all parallel and independent. Applying Claim~\ref{closetriple} again to the $m/2$ interlacing partners of the middle chords of the triples above, we obtain $m/8$ new pairwise disjoint tight triples; in particular, there exist two tight triples whose middle chords interlace, so we are done by Claim~\ref{interlacingmiddle}.
\end{proof}

In order to handle Hamiltonian graphs with many parallel chords, we shall rely on the non-constructive argument implicit in Lemma~\ref{lollipop}. In order to apply this lemma in the proof of our main result, we shall require a fair bit of preparation; this is accomplished in the somewhat technical lemma that follows below.

\begin{lemma}\label{manypar}
Let $G$ be an $n$-vertex graph with a designated Hamiltonian cycle $\CC$ with the property that no two chords of $G$ interlace. Suppose that no vertex of $G$ is chord-adjacent to two consecutive vertices of $\CC$, and that no two vertices of $G$ of degree greater than $3$ are chord-adjacent. Also, assume that there are subsets $R$ and $B$ of $V(G)$ (whose elements we shall call red and blue respectively) such that 
\begin{enumerate}
\item every vertex in $R\cup B$ has degree $3$, and
\item no two vertices in $R\cup B$ are chord-adjacent.
\end{enumerate}
Then, writing $M\geq 2$ for the number of minimal chords in $G$ and setting $r=|R|$, there exists a set $S\subseteq V(G)$ of vertices such that
\begin{enumerate}
\item $S$ dominates the chords of $G$,
\item $S$ contains no red vertices, and
\item $S$ contains at most $r+M-2$ pairs of consecutive vertices of $\CC$, and none of these pairs contains a blue vertex.
\end{enumerate}
\end{lemma}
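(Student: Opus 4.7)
The plan is to construct $S$ greedily using the non-interlacing structure of the chords, and then to bound the number of consecutive pairs by an amortised argument.

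First, I would establish the following structural fact. Let $x_1, x_2, \ldots, x_m$ denote the chord endpoints of $G$ in cyclic order along $\CC$; the non-interlacing hypothesis implies that the chords form a non-crossing graph on this cyclic sequence, and moreover a chord $x_i x_j$ is minimal if and only if $i$ and $j$ are cyclically consecutive. Indeed, if some $x_\ell$ lay strictly inside the minimal arc of $x_i x_j$, then by the minimality assumption $x_\ell$ would have no chord-neighbour inside this arc, so its chord would have to go outside, but then it would interlace with $x_i x_j$. Hence the $M$ minimal chords correspond to $M$ short edges $x_i x_{i+1}$ in the cyclic chord-endpoint sequence, a fact I would use repeatedly.

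Next, I would build $S$ in two stages. In the first stage, for each $v \in R$ the assumption that $R \cup B$ contains no two chord-adjacent vertices, combined with the fact that $v$ has degree exactly $3$ and so is incident to a unique chord, guarantees that this chord has its other endpoint $w$ in $V(G) \setminus (R \cup B)$; I would include all such $w$ in an initial set $S_0$. Then $S_0$ covers every chord incident to $R$, contains no red or blue vertex, and has $|S_0| \le r$. In the second stage, I would extend $S_0$ to $S$ by processing the remaining chords in order of nesting (innermost first, working outwards along the non-crossing ``tree'' of chords), at each step choosing the endpoint that avoids creating a new consecutive pair with an already chosen $S$-vertex and, as a tie-breaking rule, preferring a non-blue endpoint so that blue vertices never end up in a consecutive pair.

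The final step is the accounting. Each vertex of $S_0$ can participate in at most one consecutive pair, contributing at most $r$. Each minimal chord contributes at most one consecutive pair during the second stage, when both its endpoints happen to be cycle-adjacent to previously chosen $S$-vertices, contributing at most $M$. The cyclic closure of $\CC$ should let us save $2$ from this total: by choosing the starting point of the processing carefully, for instance at a long gap in the chord-endpoint sequence or at a distinguished pair of minimal chords, two potential consecutive pairs can be avoided globally. The main obstacle, I expect, will be exactly this $-2$ saving, which depends on the global cyclic structure rather than on any local greedy rule; formalising it will likely require an induction on the nesting depth of the chord family (or equivalently on $M$), together with a careful choice of the ``seam'' at which the induction is grounded. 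A secondary subtlety is the potential conflict between minimising the total consecutive-pair count and the constraint that no consecutive pair contains a blue vertex, which I expect to resolve by casework leveraging that every vertex of $R \cup B$ has degree exactly $3$ and therefore a unique chord to manage.
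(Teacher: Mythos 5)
The proposal and the paper share the same high-level shape---construct a dominating set for the chords using the laminar (non-crossing) structure, and prove the consecutive-pair bound by induction on the number of minimal chords---but the proposal leaves the crucial technical content unaddressed, and its amortised accounting is incorrect as stated.

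The structural observation you begin with (a chord is minimal if and only if its endpoints are cyclically consecutive in the circular sequence of chord endpoints) is correct, and matches the laminar picture the paper exploits. But the accounting step, which is where all the work lies, has real gaps. First, the claim that ``each vertex of $S_0$ can participate in at most one consecutive pair'' is unjustified and in fact false in general: a vertex $w\in S_0$ has two $\CC$-neighbours, and nothing in your construction prevents both of them from ending up in $S$ (for instance, the chord-neighbour of one of them might itself be red, forcing that neighbour into $S_0$ too, and the hypothesis that no vertex is chord-adjacent to two consecutive vertices of $\CC$ does not rule this out since the two forbidden relationships involve different red vertices). Second, the claim that ``each minimal chord contributes at most one consecutive pair during the second stage'' is asserted without proof; consecutive pairs can arise between endpoints of two non-minimal chords that were processed independently, and your innermost-first greedy gives no mechanism that links such conflicts to minimal chords. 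Third, the $-2$ saving is explicitly flagged by you as the ``main obstacle'' and is not resolved; but this is not an isolated cosmetic savings to be grafted on at the end, it is tied to the whole inductive structure.

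What the paper does, by contrast, is quite rigid and this rigidity is what makes the accounting go through. In the base case $M=2$, the two nested minimal chords split $V(G)$ into an ``upstairs'' arc and a ``downstairs'' arc with every chord going across; non-interlacing then forces the chords to decompose into linearly ordered \emph{stars}, and $S$ is built by choosing, for each star, its entire upstairs or its entire downstairs side. The sides naturally alternate, so a consecutive pair can only appear at a forced ``same-side'' transition between adjacent stars, and each red vertex forces at most one such transition, giving exactly $r$ pairs (which is $r+M-2$ with $M=2$). Your Stage~1 plus a per-chord greedy does not reproduce this alternating structure and so does not inherit its clean count. In the inductive step, the paper does not pick an arbitrary seam: it chooses a \emph{maximal} chord $e=xy$ whose relevant domain contains a minimal chord $f$ and no others, splits $G$ into $G_A$ and $G_B$ along $e$, recolours two boundary vertices (making the $\CC$-neighbour $w$ of $y$ red in $G_B$ if it was blue, and uncolouring $x$), requires $S_A$ to contain $y$ but not $x$, and then verifies by cases that the seam contributes at most one new pair and that the recolouring absorbs it into the $r+M-2$ budget. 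This boundary recolouring is the mechanism that makes the $-2$ persist through the induction, and there is no analogue of it in your sketch. So while your instinct to induct on $M$ and to exploit non-crossing is right, the proof as written does not close, and the missing piece is precisely the seam construction with recolouring together with a correct per-block (not per-vertex or per-chord) accounting.
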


\begin{proof}[Proof of Lemma~\ref{manypar}]
We prove this lemma by induction on the number of minimal chords as follows.

First, we prove the base case. Suppose that $G$ has exactly two minimal chords. Let $e=xy$ and $f = uv$ be the two minimal chords, and since $e$ and $f$ cannot interlace by assumption, we may assume that $x \cycprec u \cycprec v \cycprec y$. We say that a vertex is \emph{upstairs} if it lies between $x$ and $u$ on $\CC$, and \emph{downstairs} if it lies between $v$ and $y$ on $\CC$; we write $U$ and $D$ for the sets of vertices upstairs and downstairs respectively. Note that $E(G)\setminus E(\CC)$ is a collection of stars, each of which is such that its centre is upstairs and all of its leaves are downstairs, or vice versa; let these stars be $S_1, S_2, \dots ,S_k$. Note that the centres of these stars are necessarily uncoloured; we adopt the convention that the centre of a trivial star consisting of a single edge is one of its uncoloured vertices. Furthermore, these stars come with a natural ordering: for $i<j$, all the vertices of $S_i$ upstairs are closer to $x$ than all the vertices of $S_j$ upstairs, and all the vertices of $S_i$ downstairs are closer to $y$ than all the vertices of $S_j$ downstairs. To ensure that $S$ dominates the chords of $G$, we shall construct $S$ by choosing, for each $1 \le i \le k$, either to add all the vertices of $S_i$ that are upstairs to $S$, or to add all the vertices of $S_i$ that are downstairs to $S$. Since no pair of leaves of any of these stars are consecutive vertices of $\CC$, $S$ can contain a pair of consecutive vertices of $\CC$ only if the pair spans two stars. Without loss of generality, we may assume that there are $r$ stars containing a red vertex and denote them by $S_{i_1},S_{i_2},\dots,S_{i_r}$. We partition the set of all stars into $r+1$ blocks as
\[\{S_{i_0},\dots,S_{i_1-1}\} \cup \{S_{i_1},\dots,S_{i_2-1}\} \cup \dots \cup \{S_{i_{r-1}},\dots,S_{i_r-1}\} \cup \{S_{i_{r}},\dots,S_{i_{r+1}-1}\},\]
where $i_0=1$ and $i_{r+1}=k+1$. For each $0 \le j \le r-1$, we shall pick vertices in the block $\{S_{i_j},\dots,S_{i_{j+1}-1}\}$ ensuring that the last vertex picked is not blue, and that we pick at most one pair of consecutive vertices of $\CC$ from $\{S_{i_j},\dots,S_{i_{j+1}}\}$. In the case where $j = r$, we shall ensure that we create no pair of consecutive vertices of $\CC$ from the last block. 

For $0 \le j \le r$, we handle the corresponding block of stars as follows. Without loss of generality, suppose that there is a red vertex downstairs in $S_{i_j}$, and consider the sequence
\[S_{i_j} \cap U,S_{i_j+1} \cap D,S_{i_j+2} \cap U,\dots\] 
of candidates for addition to $S$, where the sequence above goes up to the star with the index ${i_{j+1}-1}$. We enlarge $S$ using the block under consideration as follows. If $j=r$, then we add all the vertices in the sequence above. If $j < r$ and the last element in the sequence above containing vertices of $S_{i_{j+1}-1}$ is on the same side (upstairs or downstairs) as a red vertex of $S_{i_{j+1}}$, then we again add all the vertices in the sequence above. Suppose now that $j < r$ and that the last element in the sequence containing vertices of $S_{i_{j+1}-1}$ is on the opposite side as a red vertex of $S_{i_{j+1}}$. Let ${i_j+t}$ denote the index of the last set in the above sequence that does not contain a blue vertex, and note that $t \ge 0$. In this case, we add all the vertices in the sequence above up to the index ${i_{j}+t}$, and then add all the vertices in the complementary sequence (obtained by selecting vertices on the opposite side) from the index ${i_{j}+t+1}$ to the index ${i_{j+1}-1}$. It is clear from the properties that $G$ is assumed to have that this selection procedure generates at most one pair of consecutive vertices of $\CC$ (possibly between $S_{i_j+t}$ and $S_{i_j+t+1}$) from this block, and it is also clear that the last vertex added to $S$ from this block is not blue. Note that in the case where $j=0$, if the corresponding block is nonempty, then there are no red vertices in this block; therefore, we can ensure that when considering the first nonempty block (which corresponds to either $j= 0$ or $j=1$), the first set in the sequence above contains the centre but not the leaves of the first star in the block; we shall need this additional property later in the induction step.

It is easy to check that the above procedure applied to each of the $r+1$ blocks of stars produces a set $S$ as required, proving the base case of the induction.

Next, suppose that $M \ge 3$. Pick a minimal chord $f$. Among all chords whose domain inducing $f$ induces no other chords (except the chord in question itself), pick a chord $e=xy$ which is maximal with respect to the order of its domain inducing $f$; denote the domains of $e$ by $A$ and $B$, where $A$ is the domain of $e$ inducing $f$. Clearly, both $G[A]$ and $G[B]$ are Hamiltonian graphs satisfying the conditions of the lemma; moreover, $G[A]$ has at most $2$ minimal chords, and by our maximal choice of $e$, it is also clear that $G[B]$ has exactly $M-1$ minimal chords. 

We now apply the inductive hypothesis to the graphs $G_A$ and $G_B$ that we now define. First, $G_A$ is obtained from $G[A]$ by adding a new uncoloured vertex $z$ and joining it to $x$ and $y$. It is clear that $G_A$ has at most two minimal chords; say $G_A$ contains $r_1$ red vertices, and set $r_2 = r - r_1$. Next, we obtain $G_B$ from $G[B]$ by recolouring some vertices as follows. Without loss of generality, we may assume that $y$ is the uncoloured centre of the star containing $e$ in $E(G)\setminus E(\CC)$. Let $w$ be the neighbour of $y$ in $\CC$ that belongs to $G[B]$. We make $w$ red in $G_B$ if it was coloured blue in $G$ (and do not alter its colour otherwise), and if $x$ was red or blue in $G$, then we make $x$ an uncoloured vertex in $G_B$. Clearly, $G_B$ has $M-1$ minimal chords, and either at most $r_2+1$ or at most $r_2$ red vertices depending on whether or not the colour of $w$ was altered in $G_B$. 

Let $S_A$ and $S_B$ be the sets obtained inductively in $G_A$ and $G_B$ respectively. First, $e = xy$ is a minimal chord in $G_A$, and $G_A$ has at most two minimal chords, so we can ask for $S_A$ to contain $y$ but not $x$ by arguing as in the base case earlier. Next, note that $S_B$ either contains at most $(r_2+1)+(M-1)-2$ pairs of consecutive vertices  of $\CC$, or at most $r_2+(M-1)-2$ pairs of consecutive vertices of $\CC$, depending on whether or not we had to alter the colour of $w$ in $G_B$. Also, observe that $x$ has degree $2$ in $G_B$, so we may assume that $S_B$ does not contain $x$. 

We now claim that $S=S_A\cup S_B$ is sufficient for our purposes. It is clear that $S$ dominates $E(G)\setminus E(\CC)$ and contains no red vertices of $G$. It is also clear, by induction, that $S$ does not contain a consecutive pair of $\CC$ in which one of the vertices is coloured blue in $G$. Next, if the colour of $w$ was altered in $G_B$, then $S$ does not contain any consecutive pairs of $\CC$ spanning $S_A$ and $S_B$ since $x \not\in S_A \cup S_B$ and $w\not\in S_B$, and if not, then $S$ contains at most one such pair (namely, the edge $yw$); the number of pairs of consecutive vertices of $\CC$ in $S$ is therefore is at most \[(r_2+1) +(M-1)-2 + r_1=r+M-2\]
in the former case, or at most 
\[r_2+(M-1)-2+r_1+1 = r+M-2\] 
in the latter case, as required.
\end{proof}

Armed with Lemmas~\ref{manyinter} and~\ref{manypar}, we are now in a position to prove our main result.

\begin{proof}[Proof of Theorem~\ref{mainthm}]
Let $G$ be an $n$-vertex graph with a designated Hamiltonian cycle $\CC$. We assume, without loss of generality, that $G$ is minimal in the sense that no two vertices with degree greater than $3$ in $G$ are chord-adjacent.

Let $2m$ be the maximum size of a set $I$ of independent chords in $G$ which may be partitioned into $m$ interlacing pairs. If $m \ge n^{3/5}$, then the result follows from Lemma~\ref{manyinter}, so we may suppose that $m \le n^{3/5}$.

Let $P$ denote the set of $4m$ endpoints of the chords in $I$, and consider the graph $G'$ on the same vertex set as $G$ obtained by deleting every chord of $G$ incident to some vertex in $P$; of course, $G'$ is also an $n$-vertex graph in which $\CC$ is the designated Hamiltonian cycle, and from the maximality of $I$, we see that no two chords of $G'$ interlace. We now transform $G'$ as follows: if $x$ and $y$ are consecutive vertices of $\CC$ that are both chord-adjacent to some vertex of $G'$, then we contract the edge $xy$ of $\CC$, and repeat this operation until it is no longer possible to do so. Let $H$ be the resulting graph, and let $\DD$ be its designated Hamiltonian cycle obtained from $\CC$ after these contractions; note that our contractions ensure that no vertex of $H$ is chord-adjacent to two consecutive vertices of $\DD$.

Now, the set of minimal chords of $G'$ with respect to $\CC$ is the same (up to the obvious identification) as the set of minimal chords of $H$ with respect to $\DD$, and furthermore, the size of the minimal domains of these minimal chords are identical in both $G'$ and $H$. Moreover, it is easy to see that $H$ does not contain a pair of interlacing chords. We call any vertex of $H$ that corresponds to one or more contracted edges of $G'$ a \emph{contracted vertex}, and we colour a contracted vertex red in $H$ if it is the image of $n^{1/5}$ or more contracted edges, and blue otherwise. By the minimality of $G$ assumed above, we see that each contracted vertex of $H$ is the image under contractions of some set of vertices all of which have degree $3$ in $G$; hence, no contracted vertex is chord-adjacent in $G$ to any vertex in $P$, and no two contracted vertices are chord-adjacent.

Write $M$ for the number of minimal chords of $H$, and let $r$ denote the number of red vertices in $H$. Note that, by definition, we have $r \le n^{4/5}$ since each red vertex corresponds to a set of at least $n^{1/5}$ vertices of $G$, and these sets are all pairwise disjoint. Next, since $H$ does not contain any interlacing pairs of chords, the minimal domains of the minimal chords of $H$ are all pairwise disjoint, so if $M \ge n^{1/2}$, then one of these minimal domains contains at most $n^{1/2}$ vertices in $H$, and therefore in $G'$ and $G$ as well, in which case we are done. Therefore, we may suppose that $M \le n^{1/2}$. 

We now apply Lemma~\ref{manypar} to $H$ with $\DD$ as its designated Hamiltonian cycle to get a set $S$ of vertices such that $S$ dominates $E(H)\setminus E(\DD)$,  contains no red vertices, and contains at most $r+M-2$ pairs of consecutive vertices of $\DD$ with none of these pairs containing a blue vertex. Let us now add back to $H$ the chords that we deleted earlier, namely, those chords incident to some vertex in $P$; we call the resulting graph $H'$. Note that $X=P\cup S$ dominates the $V(H') \setminus X$ in the graph spanned by the chords of $H'$ since every vertex of degree $2$ in $H$ is chord-adjacent to some vertex in $P$; furthermore, $X$ contains at most $8m+r+M-2$ consecutive pairs of vertices of $\DD$. 

We would like to apply Lemma~\ref{lollipop} to $H'$; to do so, we need to ensure that $X$ is independent in the graph spanned by the edges of $\DD$. To ensure this, we shall contract every edge of $\DD$ between two vertices of $X$; we call the resulting graph $F$ and let $\EE$ be its designated Hamiltonian cycle obtained from $\DD$ after these contractions. Clearly, the image of $X$ in $F$ is a set that satisfies all the conditions of Lemma~\ref{lollipop} with respect to $F$ and $\EE$; therefore, it follows from Lemma~\ref{lollipop} that $F$ contains another Hamiltonian cycle $\FF$. Note that we have not contracted any edge incident to some red vertex in $H'$ in constructing $F$; moreover, we have contracted at most $8m$ blue vertices of $H'$ in constructing $F$.

Now, this cycle $\FF$ in $F$ gives rise to a cycle $\DD'$ in $H'$ missing at most $8m+r+M-2$ vertices of $H'$; indeed, at most $8m$ of the missing vertices are blue, no red vertex is missed, and the remaining missing vertices are non-contracted vertices of $G$. Now, we lift this cycle $\DD'$ in $H'$ to a cycle $\CC'$ in $G$ by replacing each red or blue vertex in $\DD'$ with an appropriate path of the original vertices of $G$; we can always choose this path to contain all the pre-images of the coloured vertex in question since, as mentioned earlier, all such vertices have degree $3$ in $H'$. It then follows that $\CC'$ misses at most $8mn^{1/5}+r+M-2$ vertices of $G$. Also, note that $\CC'\not=\CC$ since $\FF$ contains at least one chord of $F$ (and also $G$), and this chord is present $\CC'$. Therefore, $\CC'$ is a nontrivial cycle of $G$ and its length is at least 
\[n- (8mn^{1/5} +r+M-2);\]
the result follows since we know that $m \le n^{3/5}$, $r \le n^{4/5}$ and $M \le n^{1/2}$.
\end{proof}

\section{Conclusion}\label{s:conc}
Our results raise a number of questions. Perhaps the most fundamental of these concerns the nature of the error term in Theorem~\ref{mainthm}. We expect that it should be possible to improve the exponent of $4/5$ in the error term in our main result using the methods developed here, possibly up to an exponent of $1/2$; however, we chose to keep the presentation simple because we expect much more to be true.
\begin{conjecture}\label{c2}
If an $n$-vertex graph $G$ with $\delta(G) \ge 3$ contains a Hamiltonian cycle, then $G$ contains another cycle of length at least $n-K$, where $K >0$ is an absolute constant.
\end{conjecture}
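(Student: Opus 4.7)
The plan is to push the framework of Theorem~\ref{mainthm} to its limits and then bridge the remaining gap with an essentially new ingredient. In the proof of Theorem~\ref{mainthm}, three error contributions arise: the bound $m \le n^{3/5}$ on the number of interlacing pairs, the bound $M \le n^{1/2}$ on the number of minimal chords, and the bound $r \le n^{4/5}$ on the number of red (heavily-contracted) vertices, combined with the multiplicative factor $n^{1/5}$ incurred per blue vertex on lifting. To obtain an $O(1)$ error term, all of these quantities must be driven down to absolute constants simultaneously, which seems to require both a sharpening of Lemmas~\ref{manyinter} and~\ref{manypar} and a genuinely new idea for their combination.

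First, I would attempt to sharpen Lemma~\ref{manyinter}. The current proof locates two tight triples whose middle chords interlace; by working with longer ``towers'' of nested interlacing chords and iterating Dilworth-type arguments more carefully, it should be possible to show that a sufficiently large constant number of interlacing pairs already forces a second cycle of length $n - O(1)$. The idea is that once $m$ is a large constant, there is enough room in the poset $\PP$ to find not merely a tight triple but a tight chain or grid of interlacing chords whose consecutive endpoints on $\CC$ are within $O(1)$ of each other. This would effectively eliminate the $m$-term from the final bound and permit us to assume in the remainder that there are essentially no interlacing chords at all.

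Second, for the parallel-chord case the natural direction is to iterate the lollipop argument of Proposition~\ref{lollipop}. Rather than applying it once to a single cleverly chosen coloring and accepting whatever vertices it misses, I would apply it in an inductive fashion: given a long cycle produced by the lollipop argument, identify the missed vertices, alter the dominating set $X$ locally near each such vertex, and reapply the argument to extend the cycle. The minimum-degree-$3$ hypothesis provides exactly enough room for such local modifications; the key technical lemma would assert that any nontrivial cycle in $G$ missing $k \ge K$ vertices can be enlarged to a cycle missing at most $k-1$ vertices, for some absolute constant $K$.

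The main obstacle I foresee is handling the interaction between interlacing and parallel chords without introducing polynomial loss. The present proof cleanly separates the two cases by deleting the $4m$ endpoints of chords in $I$ before invoking Lemma~\ref{manypar}, and this deletion is precisely what forces the $8mn^{1/5}$ term. A new idea seems to be needed here: one would want to argue either that the two cases cannot both occur ``extremally'' in the same graph, or to develop a variant of Lemma~\ref{lollipop} which is robust to the presence of a bounded number of interlacing chords, perhaps in the spirit of Thomassen's use of the Lov\'asz local lemma in~\citep{thomassen1, thomassen2}. Constructing such a robust lollipop argument that loses only a constant number of vertices in a single pass is, in my view, the real crux of Conjecture~\ref{c2}.
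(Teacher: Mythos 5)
This statement is an open conjecture in the paper, not a theorem: the authors explicitly pose Conjecture~\ref{c2} in the concluding section and do not prove it, so there is no proof in the paper for your proposal to be compared against.

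Your text is also not a proof but a research programme, and you more or less say so yourself. Each of the three steps you outline is stated as an aspiration with no argument supplied. The proposed strengthening of Lemma~\ref{manyinter} (that a constant number of interlacing pairs forces a second cycle of length $n - O(1)$) is not established; the iterated Dilworth scheme you gesture at gives quantities that degrade polynomially in $m$, and nothing you write shows how to make the ``tight chain or grid'' have gaps of size $O(1)$ rather than $O(n/m^{\alpha})$. The key technical lemma you propose for the parallel case --- that any nontrivial cycle missing $k \ge K$ vertices can be enlarged to one missing $k-1$ --- is exactly the sort of local-improvement statement that is known to be delicate here: Proposition~\ref{lollipop} is a global parity argument, not a local augmentation argument, and there is no obvious way to ``alter $X$ locally near a missed vertex and reapply'' while keeping the hypotheses of the lollipop lemma. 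Finally, you correctly identify that the $8mn^{1/5}$ term coming from deleting the endpoints of $I$ is the real bottleneck, and you explicitly say that resolving it is ``the real crux of Conjecture~\ref{c2}'' --- i.e., you acknowledge the central step is missing. So the proposal, while a reasonable discussion of obstacles, does not constitute a proof, and indeed no proof currently exists.
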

It is not impossible that Conjecture~\ref{c2} holds with $K = 2$; however, we remark that the ideas developed by Thomassen~\citep{thomassen4} to disprove certain conjectures of Faudree and Schelp about path lengths in Hamiltonian graphs may be relevant in ruling out such small values of $K$.

Next, while a minimum degree of $3$ is not sufficient, as discussed earlier, to guarantee a second Hamiltonian cycle in a Hamiltonian graph, we remind the reader that it is still unknown if a minimum degree of $100$, say, suffices instead; see~\citep{deg41, deg42, deg43} for more details. 

In closing, let us mention a conjecture due to Verstra\"ete~\citep{linear} that seems closely related to the problem addressed here.
\begin{conjecture}\label{linconj}
If an $n$-vertex graph $G$ with $\delta(G) \ge 3$ contains a Hamiltonian cycle, then $G$ contains cycles of $\Omega(n)$ distinct lengths.
\end{conjecture}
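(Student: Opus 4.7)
The plan is to push the dispersion-versus-concentration dichotomy behind Theorem~\ref{mainthm} all the way to a count of cycle lengths. For any chord $uv$, the two arcs of $\CC$ joining $u$ and $v$ together with $uv$ itself form cycles of lengths $1+d_\CC(u,v)$ and $n+1-d_\CC(u,v)$, so if the quantity $d_\CC(u,v)$ takes $\Omega(n)$ distinct values over the chord set of $G$, we are done immediately; call this the \emph{dispersed} case. Since $\delta(G)\ge 3$ forces at least $n/2$ chords in $G$, the complementary \emph{concentrated} case yields, via pigeonhole, a linear-sized family $\mathcal{F}_d$ of chords all of one common span $d$.

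In the concentrated case the idea is to combine pairs of chords in $\mathcal{F}_d$. If two such chords interlace, say $u_1\cycprec u_2\cycprec v_1\cycprec v_2$ with $d_\CC(u_i,v_i)=d$, then writing $a=d_\CC(u_1,u_2)$ one quickly checks that $d_\CC(v_1,v_2)=a$, and hence that the cycle $u_1\to v_1\to v_2\to u_2\to u_1$ (using both chords together with the natural, non-self-intersecting arcs of $\CC$) has length $2+2a$. Consequently, if the offsets $a$ arising from interlacing pairs in $\mathcal{F}_d$ attain $\Omega(n)$ distinct values, we are again done; parallel pairs from $\mathcal{F}_d$ admit a completely analogous length computation. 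Iterating this dichotomy across distinct spans should funnel the problem into a highly structured residual case in which the chords of each span recur in an essentially arithmetic pattern around $\CC$.

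In this rigid residual case the plan is to combine the poset machinery behind Lemma~\ref{manyinter} with a parametric version of the parity-based argument of Proposition~\ref{lollipop}. The key observation is that the deficit $n-|\FF|$ of a cycle $\FF$ obtained after the contraction-and-lift procedure of the proof of Theorem~\ref{mainthm} equals, up to lower-order terms, the number of consecutive pairs of $\CC$ absorbed into the chosen dominating set $X$ together with the number of contracted blue vertices; in the presence of approximate vertex-transitivity, one hopes to exhibit $\Omega(n)$ essentially different valid choices of $X$ (all respecting the hypotheses of Proposition~\ref{lollipop}) realising pairwise different deficits, and hence cycles of $\Omega(n)$ pairwise distinct lengths.

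The main obstacle, predictably, is the non-constructive nature of the parity argument: Proposition~\ref{lollipop} produces a single nontrivial Hamiltonian cycle of an a priori uncontrolled shape, and extracting an entire family of cycles of many distinct lengths from it seems to require a genuinely new idea; this step looks to be the technical heart of any attack on Conjecture~\ref{linconj}. An alternative route would be to iterate Theorem~\ref{mainthm} on carefully modified subgraphs, but preserving both Hamiltonicity and $\delta\ge 3$ under repeated modifications while also ensuring that the length of the cycle found at each step has not yet been witnessed appears equally difficult.
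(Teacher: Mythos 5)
The statement you were asked to prove is an open conjecture (attributed to Verstra\"ete) that the paper records in its closing section \emph{without proof}: the authors only remark that an $\Omega(\sqrt n)$ lower bound follows from the poset arguments underlying Lemma~\ref{manyinter}, and they explicitly pose the improvement to $\Omega(n)$ as an open problem. Your write-up is accordingly honest about being a sketch rather than a proof, and you correctly isolate the key obstruction: the parity argument of Proposition~\ref{lollipop} certifies the existence of a single nontrivial Hamiltonian cycle of uncontrolled shape, and gives no handle whatsoever on producing a \emph{family} of cycles with many pairwise distinct lengths.

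Beyond that, there are concrete gaps already in your first dichotomy. If the spans $d_\CC(u,v)$ take fewer than $\varepsilon n$ distinct values over the at least $n/2$ chords, pigeonhole only yields one span realised by at least $1/(2\varepsilon)$ chords --- a constant, not a linear-sized family $\mathcal{F}_d$. For the second stage to produce $\Omega(n)$ distinct offsets from pairs within $\mathcal{F}_d$ you would need $|\mathcal{F}_d| = \Omega(\sqrt n)$, which is only forced when the number of distinct spans is $O(\sqrt n)$; the intermediate regime, with between $\sqrt n$ and $\varepsilon n$ distinct spans, is left uncovered, and ``iterating the dichotomy'' is too vague to close it. Moreover, the parallel case is not analogous to the interlacing case as you assert: if two parallel chords both have span $d$ (so that, up to relabelling, $u_1 \cycprec v_1 \cycprec u_2 \cycprec v_2$ with $d_\CC(u_1,v_1)=d_\CC(u_2,v_2)=d$), the nontrivial cycle through both has length $n+2-2d$ \emph{independently} of their relative position, so parallel same-span pairs contribute no new lengths at all. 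Any serious attack on Conjecture~\ref{linconj} will therefore have to contend both with chord sets of intermediate dispersion and with the fact that combining chords of a common span is length-preserving in several regimes.
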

It is easy to deduce a lower bound of the form $\Omega(\sqrt{n})$ for the above problem using the poset-based arguments developed here; it would be of considerable interest to push things further.

\bibliographystyle{amsplain}
\bibliography{long_h_cycles}

\end{document}